\newcommand{\e}{\varepsilon}
\numberwithin{equation}{section}
\newtheorem{Prop}{\bf Proposition}[section]
\newtheorem{Cor}{\bf Corollary}[section]
\newtheorem{defn}{\bf Definition}[section]
\newtheorem{Ex}{\bf Example}[section]
\newtheorem{Th}{Theorem}[section]
\begin{document}
\def \b{\Box}

\begin{center}
{\Large {\bf ALGEBRAIC PROPERTIES OF $~{\cal G}-$GROUPOIDS}}
\end{center}

\begin{center}
{\bf Mihai IVAN}
\end{center}

\setcounter{page}{1}

\pagestyle{myheadings}

{\small {\bf Abstract}. The main purpose of  this paper is to give
a new definition for the notion of group-groupoid. Also, several
basic properties of group-groupoids are established.}
{\footnote{{\it AMS classification:} 20L13, 20L99.\\
{\it Key words and phrases:} groupoid, group-groupoid.}}

\section{Introduction}
\indent\indent There are basically two ways of approaching
groupoids. The first one is the category theoretical approach. The
second one is algebraically considering them as a particular
generalization of the structure of group. Groupoids are like
groups, but with partial multiplication; i.e., only specified
pairs of elements can be multiplied and inverses with respect to
the multiplication exist for each element.

The groupoid was introduced by H. Brandt [Math. Ann.,
\textbf{96}(1926), 360-366] and it is developed by P. J. Higgins
in \cite{higgi}.

The notion of group-groupoid was defined by R. Brown and Spencer
in the paper \cite{brspen}. A group-groupoid is viewed as a
groupoid object in the category of groups (\cite{brown}).

 The groupoids, group-groupoids and their generalizations (topological
groupoids and topological group-groupoids, Lie groupoids and Lie
group-groupoids etc.) are mathematical structures that have proved
to be useful in many areas of science (see for instance
\cite{brown06, wein, mack, icozgu, guicoz, icguoz, mcuk}).

 The paper is organized as follows. In Section 2  we present some basic facts
 about  groupoids. In Section 3  we present the notion of group-groupoid as given in \cite{brmuc} and
 \cite{brspen}. We prove a main theorem for
characterization the group groupoids. This is used for give a new
definition for the concept of group-groupoid. Applying these facts
we establish some important properties in the category of
group-groupoids.

\section{Preliminaries about groupoids}
\indent\indent We recall the minimal necessary backgrounds on
groupoids (\cite{mack, wein}).

A way to think of a groupoid is to say that a {\it groupoid} is a
small category in which every morphism is an isomorphism
(\cite{higgi, brown}). Thus, a groupoid consists of:\\
$~~~\bullet~$ two sets $G$ and $G_{0}$ called the {\it set of
arrows} ({\it or {\it elements}}) and a {\it set of objects} (or
the {\it base}) respectively, together with maps $\alpha, \beta: G
\to G_{0},~\e:G_{0}\to G $ such that $\alpha\circ \e =\beta\circ
\e =Id_{G_{0}} $;\\
$~~~\bullet~$ if $x,y\in G$ and $\beta(x)=\alpha(y), $ then a {\it
product} $x y$ exists such that $\alpha(x y)=\alpha(x)$ and
$\beta(x y)=\beta(y)$, and this product is associative;\\
$~~~\bullet~ \e(u)\in G$ (denoted  with
$1_{u}$) for $u\in G_{0}$, act as identities, and\\
$~~~\bullet~$ each $x\in G$ has an inverse $x^{-1}\in G$
 with $\alpha(x^{-1})=\beta(x), \beta(x^{-1})=\alpha(x), x x^{-1} = 1_{\alpha(x)} $ and $ x^{-1} x = 1_{\beta(x)}.$

In the following definition we describe the groupoid as algebraic
structure.

\begin{defn}{\rm (\cite{mack})~ A \textit{groupoid $ G $
over} $ G_{0} $  is a pair $ (G, G_{0})$ of sets endowed with two
surjective maps $\alpha,\beta: G \to G_{0}$ ({\it source} and {\it
target}), a partially {\it multiplication} $ m:G_{(2)}:=\{(
x,y)\in G\times G | \beta(x)=\alpha(y)\}\to G,~(x,y)\mapsto
m(x,y):=x\cdot y, $ ($G _{(2)}$ is the {\it set of composable
pairs}), an injective map $ \e:~G_{0} \to G $ ({\it inclusion
map}) and a map $ i :G \to G,~x\mapsto  i(x):=x^{-1}$
(\textit{inversion}), which verify the following conditions:

(G1)~({\it associativity}): if  $(x,y)\in G_{(2)}$ and $(y,z)\in
G_{(2)}$, then so $(x\cdot y,z)\in G_{(2)}$ and $(x,y\cdot z)\in
G_{(2)}$, and the relation,  $~(x\cdot y)\cdot z=x\cdot(y\cdot z)$
is satisfied;

(G2)~({\it units}): for each  $ x\in G $ it follows $
(\e(\alpha(x)),x)\in G_{(2)}, (x,\e(\beta(x)))\in  G_{(2)} $ and
$~\e(\alpha(x))\cdot x = x = x\cdot \e(\beta(x))$;

 (G3)~({\it inverses}): for each $ x\in G $ it follows $
(x^{-1},x)\in G_{(2)},~ (x,x^{-1}) \in G_{(2)}~$ and $
~x^{-1}\cdot x = \e(\beta(x)),~ x\cdot
x^{-1}=\e(\alpha(x)).$}\hfill $\Box$
\end{defn}

\markboth{Mihai Ivan}{Algebraic properties  of $~{\cal
G-}$groupoids}

We write sometimes $ x y $ for $ m(x,y) $, if $ (x,y) \in
G_{(2)}.$ Whenever we write a product in a given groupoid, we are
assuming that it is defined.

The element $\varepsilon(\alpha(x))$ (resp.,
$\varepsilon(\beta(x))$) is called the \textit{left}
 (resp., \textit{right unit}) of $x;$ $~\varepsilon(G_{0})$ is
called the \textit{unit set};  $ x^{-1}$ is called the
\emph{inverse} of $x$.

 For a groupoid we use the notation $ (G, \alpha, \beta, m, \e, i, G_{0}) $  or $ (G, G_{0});~ \alpha,
\beta, m, \e, i $ are called the {\it structure functions} of $G$.
For each $ u\in G_{0} $, the set $ \alpha^{-1}(u)$ (resp., $
\beta^{-1}(u)$) is called \textit{$\alpha-$fibre} (resp.,
\textit{$\beta-$fibre}) of $ G$ at $ u$. The map $ ( \alpha,
\beta): G \to G_{0}\times G_{0} $ defined by $ (\alpha ,
\beta)(x):= (\alpha(x), \beta(x)),~(\forall)~ x\in G $ is called
the {\it anchor map} of $ G.$ A groupoid is {\it transitive}, if
its anchor map is surjective.

For any $ u\in G_{0},$ the set $ G(u):= \alpha^{-1}(u)\cap
\beta^{-1}(u)$ is a group under the restriction of the
multiplication, called the {\it isotropy group at} $ u $ of the
groupoid $ (G, G_{0}). $

If $(G, \alpha, \beta, m, \varepsilon, i, G_{0} )$ is a groupoid
such that  $G_{0}\subseteq G$ and $\varepsilon:G_{0}\to G$ is the
inclusion, then we say that $(G, \alpha, \beta, m, i, G_{0} )$ is
a {\it $G_{0}-$groupoid} or a {\it Brandt groupoid}.

In the following proposition we summarize some basic properties of
groupoids obtained directly from definitions.

\begin{Prop}
{\rm (\cite{ivan})} If $ ( G, \alpha, \beta, m,\e, i, G_{0}) $ is
a groupoid, then:

$(i)~~~\alpha(x y) = \alpha(x) $ and $~\beta(x y)=\beta(y) $ for
any $ (x,y)\in G_{(2)};$

$(ii)~~\alpha( x^{-1}) =\beta (x)~~\hbox{and}~~\beta(x^{-1})
=\alpha(x),~ (\forall) x\in G;$

$(iii)~\alpha(\varepsilon(u))=u,~ \beta (\varepsilon(u))=u ,~
\varepsilon(u)\cdot\varepsilon(u)=\varepsilon(u),~
(\varepsilon(u))^{-1}=\varepsilon(u), ~~(\forall) u\in G_{0};$

$(iv)~~~$ if $(x,y)\in G_{(2)},$ then $(y^{-1},x^{-1})\in G_{(2)}$
and  $~(x\cdot y)^{-1}=y^{-1}\cdot x^{-1};$

$(v)~~\varphi : G(\alpha(x)) \to G(\beta(x)),~\varphi (z):= x^{-1}
z x $ is an isomorphism of groups.

$(vi)~$ if $ (G, G_{0} ) $ is transitive, then all isotropy groups
are isomorphic.\hfill$\Box$
 \end{Prop}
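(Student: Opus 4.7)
The plan is to verify the six items in an order respecting their dependencies, namely (ii), (iii), (i), (iv), (v), (vi); all six then follow by careful bookkeeping against the axioms (G1)--(G3) of Definition 2.1, together with the surjectivity of $\alpha$ and $\beta$.

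For (ii), the memberships $(x^{-1},x),(x,x^{-1})\in G_{(2)}$ from (G3) unfold, by the very definition of $G_{(2)}$, into $\beta(x^{-1})=\alpha(x)$ and $\alpha(x^{-1})=\beta(x)$. For (iii), applying (G2) to $x=\e(u)$ yields $\beta(\e(\alpha(\e(u))))=\alpha(\e(u))$; since $\alpha$ is surjective this forces $\alpha(\e(u))=u$ for every $u\in G_0$, and $\beta(\e(u))=u$ is symmetric. The idempotency $\e(u)\cdot\e(u)=\e(u)$ is then (G2) with $x=\e(u)$, and $(\e(u))^{-1}=\e(u)$ comes from the short chain $(\e(u))^{-1}=(\e(u))^{-1}\cdot\e(\beta((\e(u))^{-1}))=(\e(u))^{-1}\cdot\e(u)=\e(\beta(\e(u)))=\e(u)$, using (G2), (ii), and the fibre identities just proved.

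For (i), I would pick $z:=\e(\beta(y))$; then $(y,z)\in G_{(2)}$ by (G2), so (G1) makes $(xy,z)\in G_{(2)}$, and unpacking this against (iii) gives $\beta(xy)=\alpha(z)=\beta(y)$; the identity $\alpha(xy)=\alpha(x)$ is obtained symmetrically with $z':=\e(\alpha(x))$ on the left. For (iv), (ii) makes $(y^{-1},x^{-1})$ composable, and associativity together with $y\cdot y^{-1}=\e(\alpha(y))=\e(\beta(x))$ gives $(xy)(y^{-1}x^{-1})=xx^{-1}=\e(\alpha(xy))$, with the mirror computation producing the other identity; a brief uniqueness remark (left-multiply $x\cdot w=\e(\alpha(x))$ by $x^{-1}$) then identifies $y^{-1}x^{-1}$ with $(xy)^{-1}$. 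For (v), items (i)--(iii) guarantee that $x^{-1}zx$ is defined and lies in $G(\beta(x))$ whenever $z\in G(\alpha(x))$, the homomorphism property reduces to the middle cancellation $x\cdot x^{-1}=\e(\alpha(x))$, and $w\mapsto xwx^{-1}$ is a two-sided inverse. Finally (vi) is immediate from (v): transitivity of the anchor provides, for any $u,v\in G_0$, some $x$ with $\alpha(x)=u$, $\beta(x)=v$, so $G(u)\cong G(v)$.

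The only real pitfall is circularity in (iii), where the derivation of $\alpha\circ\e=\mathrm{id}=\beta\circ\e$ must not invoke (i) (which itself depends on (iii)); this is avoided by going through $x=\e(u)$ in (G2) and using surjectivity of $\alpha$ and $\beta$. No other step presents a genuine obstacle; the whole proposition amounts to a sequence of short, direct manipulations of the axioms.
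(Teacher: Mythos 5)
The paper itself gives no proof of Proposition 2.1: it is quoted from \cite{ivan} and closed with a box, with only the remark that such properties are ``obtained directly from definitions.'' So there is no argument of the author's to compare yours against; your write-up supplies the standard verification, and the ordering (ii), (iii), (i), (iv), (v), (vi) correctly respects the dependencies --- in particular you are right that (i) needs $\alpha\circ\e=\beta\circ\e=Id_{G_{0}}$ first, since Definition 2.1 does not postulate these identities.

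One step is invalid as written, precisely at the point you yourself flagged as the pitfall. From (G2) applied to $x=\e(u)$ you obtain $\beta\bigl(\e(\alpha(\e(u)))\bigr)=\alpha(\e(u))$, which only says that $\beta\circ\e$ fixes the particular point $\alpha(\e(u))$; surjectivity of $\alpha$ cannot upgrade this to $\alpha(\e(u))=u$, because $\e$ is not surjective and $u$ need not lie in the image of $\alpha\circ\e$. The correct use of surjectivity is to apply (G2) to a \emph{general} $x$: the membership $(x,\e(\beta(x)))\in G_{(2)}$ gives $\alpha(\e(\beta(x)))=\beta(x)$, and surjectivity of $\beta$ then yields $\alpha(\e(u))=u$ for every $u\in G_{0}$; dually, $(\e(\alpha(x)),x)\in G_{(2)}$ together with surjectivity of $\alpha$ gives $\beta(\e(u))=u$. (Note also that each identity comes from the opposite half of (G2) than your ``symmetric'' remark suggests.) With that one-line repair the rest of your argument --- the idempotency and self-inverseness of $\e(u)$, the computation of $\alpha(xy)$ and $\beta(xy)$ via composability with units, the uniqueness-of-inverse step in (iv), and the conjugation isomorphism in (v)--(vi) --- goes through exactly as you describe.
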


Applying Proposition 2.1, it is easily to prove the following
proposition.

\begin{Prop}
{\rm (\cite{ivan})} The structure functions of a groupoid $ ( G,
\alpha, \beta, m, \varepsilon, i, G_{0}) $ verifies the following
relations:

$\alpha \circ i = \beta,~~ \beta \circ i = \alpha, ~~i\circ
\varepsilon = \varepsilon ~~ \alpha \circ \varepsilon= \beta \circ
\varepsilon= Id_{G_{0}}~~ \hbox{and}~~ i \circ i = Id_{G}.$
\hfill$\Box$
\end{Prop}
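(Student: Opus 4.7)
The plan is to verify each of the five identities pointwise, by evaluating both sides on a generic element of $G$ or $G_{0}$ and invoking the corresponding part of Proposition 2.1. All five statements are equalities of functions, so the verification reduces to a finite sequence of small computations.

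First I would handle $\alpha\circ i=\beta$ and $\beta\circ i=\alpha$: for any $x\in G$, we have $i(x)=x^{-1}$, and Proposition 2.1(ii) gives directly $\alpha(x^{-1})=\beta(x)$ and $\beta(x^{-1})=\alpha(x)$. The identities $\alpha\circ\varepsilon=\beta\circ\varepsilon=Id_{G_{0}}$ are recorded already in Proposition 2.1(iii), namely $\alpha(\varepsilon(u))=u$ and $\beta(\varepsilon(u))=u$ for every $u\in G_{0}$. For $i\circ\varepsilon=\varepsilon$, the same part (iii) of Proposition 2.1 tells us $(\varepsilon(u))^{-1}=\varepsilon(u)$, i.e.\ $i(\varepsilon(u))=\varepsilon(u)$.

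The one identity requiring a little more thought is $i\circ i=Id_{G}$, that is $(x^{-1})^{-1}=x$ for every $x\in G$. The clean way is to argue that inverses in a groupoid are unique: if $y,y'\in G$ both satisfy $y\cdot x=\varepsilon(\beta(x))$ and $x\cdot y=\varepsilon(\alpha(x))$ (and similarly for $y'$), then using associativity (G1) together with the unit axiom (G2) one obtains $y=y\cdot\varepsilon(\alpha(x))=y\cdot(x\cdot y')=(y\cdot x)\cdot y'=\varepsilon(\beta(x))\cdot y'=y'$. Applying axiom (G3) to $x^{-1}$ gives $x^{-1}\cdot(x^{-1})^{-1}=\varepsilon(\alpha(x^{-1}))=\varepsilon(\beta(x))$ and $(x^{-1})^{-1}\cdot x^{-1}=\varepsilon(\beta(x^{-1}))=\varepsilon(\alpha(x))$; but the same axiom applied to $x$ shows that $x$ itself satisfies these two equations in place of $(x^{-1})^{-1}$. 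By uniqueness, $(x^{-1})^{-1}=x$, hence $i\circ i=Id_{G}$.

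The only genuine obstacle is this last step, since it silently uses the uniqueness of inverses; everything else is bookkeeping. I would therefore either record the uniqueness of inverses as a short preliminary lemma (an immediate consequence of (G1), (G2), (G3)) or fold the four-line calculation above directly into the proof. With that in hand, the proposition follows by concatenating the five pointwise verifications.
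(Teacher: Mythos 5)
Your proof is correct, and it follows exactly the route the paper intends: the paper gives no proof at all, stating only that the proposition follows ``by applying Proposition 2.1'' and closing with $\Box$. You rightly observe that the first four identities are literal restatements of Proposition 2.1(ii)--(iii), and you correctly supply the one step that Proposition 2.1 does not cover, namely $i\circ i=Id_{G}$ via uniqueness of inverses derived from (G1)--(G3) --- a genuine (small) gap in the paper's ``it is easily proved'' that your argument fills properly.
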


A subgroupoid of the groupoid $ (G, G_{0}) $ is a subcategory $
(H, H_{0}) $ of $ (G, G_{0}) $ such that $ (H, H_{0}) $ is itself
a groupoid.  A morphism between two groupoids is essentially  a
functor. These concepts are described in the following definition.

\begin{defn}
{\rm (\cite{ivan99})  Let $(G ,\alpha, \beta, m, \e, i, G_0)$  be
a groupoid. A pair of nonempty subsets $(H,H_0)$ where $H\subseteq
G $ and ${H_0}\subseteq G_0 $, is called \textit{subgroupoid} of
$G$, if:

$(1)~~\alpha (H)=H_0\quad \hbox{and}\quad \beta (H)=H_0;$

$(2)~~H $ is closed under partially multiplication and inversion,
that is:\\[0.1cm]
$(i)~\forall~x,y\in H$  such that $(x,y)\in G_{(2)}
\Longrightarrow ~x\cdot y\in H$; $~(ii)~\forall~x\in
H\Longrightarrow x^{-1}\in H.$}\hfill$\Box$
\end{defn}
\begin{Ex}
{\rm $(i)~$ A nonempty set $~G_{0}~$ is  a groupoid over $~G_{0},
$ called the {\it null groupoid}. For this, we take $~\alpha =
\beta = \varepsilon = i = Id_{G_{0}}~$ and $~ u\cdot u = u~$ for
all $~u\in G_{0}.~$

$(ii)~$ A group $ G $ having $ e $ as unity is  a $ \{ e
\}-$groupoid with respect to structure functions: $~\alpha
(x)=\beta (x):= e$; $\varepsilon:\{e\} \to G,~\varepsilon(e):=
e,~G_{(2)}= G\times G,~ m (x,y):= xy~$ and $~i:G \to G, ~i(x):=
x^{-1}$. Conversely, a groupoid with one unit is a group.

$(iii)~$ The Cartesian product $~G:= X \times X~$ has a structure
of groupoid over  $ X $  by taking the structure functions as
follows: $~\overline{\alpha}(x,y):= x,~ \overline{\beta}(x,y):=
y;~$ the elements $~ (x,y)~$ and $~(y^{\prime},z)~$ are composable
in $~G:=X \times X~$ iff $~y^{\prime} = y~$ and we define
$~(x,y)\cdot (y,z) = (x,z)~$, the inclusion map $
\overline{\varepsilon}:X \to X\times X$ is given by
$\overline{\varepsilon}(x):=(x,x)$ and the inverse of $~(x,y)~$ is
defined by $~(x,y)^{-1}:= (y,x).~$ This is called the {\it pair
groupoid} associated to set $X$. Its unit set is $~\e(X)=\{
(x,x)\in X\times X | x\in X\}.$  The isotropy group $G(x)$ at
$x\in X$ is the null group $\{(x,x)\}$.

$(iv)~$ For the groupoids $ (G, \alpha_{G}, \beta_{G},
m_{G},\e_{G}, i_{G}, G_{0}) $ and $ (K, \alpha_{K}, \beta_{K},
m_{K}, \e_{K}, i_{K}, K_{0}), $ one construct the groupoid $
(G\times K, G_{0}\times K_{0})$ with the structure functions given by:\\
$~\alpha_{G\times K}(g,k) = (\alpha_{G}(g),\alpha_{K}(k));~
~~\beta_{G\times K}(g,k) = (\beta_{G}(g),\beta_{K}(k));~$\\
$m_{G\times K}((g,k),(g^{\prime},k^{\prime}))= (
m_{G}(g,g^{\prime}), m_{K}(k,
k^{\prime})),~(\forall)~(g,g^{\prime})\in G_{(2)},~(k,k^{\prime})\in K_{(2)};$\\
$~\e_{G\times K}(u,v) = (\e_{G}(u),\e_{K}(v)),~(\forall)~u\in
G_{0}, v\in K_{0}~$ and $~i_{G\times K}(g,k) = ( i_{G}(g),
i_{K}(k))$.

This groupoid is called the {\it direct product of $ (G, G_{0}) $
and $ (K, K_{0})$}}.\hfill$\b$
\end{Ex}
\begin{defn}
{\rm (\cite{mack})
 A {\it morphism of groupoids} or {\it groupoid morphism} from
$ (G, G_{0}) $ into $ (G^{\prime}, G_{0}^{\prime}) $ is a pair
$(f, f_{0}), $  where $ f:G \to G^{\prime} $ and $ f_{0}: G_{0}
\to G_{0}^{\prime} $ such that the following conditions hold:

$(1)~~~\alpha^{\prime}\circ f = f_{0} \circ
\alpha,~\beta^{\prime}\circ f = f_{0} \circ \beta $;

$(2)~~~f(m(x,y)) = m^{\prime}(f(x),f(y))~$ for all $~(x,y)\in
G_{(2)}.$}\hfill$\Box$
\end{defn}

A groupoid morphism $ (f, Id_{G_{0}}): (G, G_{0}) \to (G^{\prime},
G_{0})$ is called {\it $ G_{0}-$morphism of groupoids}. A groupoid
morphism $ (f, f_{0}): (G, G_{0}) \to (G^{\prime},
G_{0}^{\prime})$ such that $f$ and $f_{0}$ are bijective maps, is
called {\it isomorphism of groupoids}.

\begin{Prop}{\rm (\cite{ivan})}
If $ (f, f_{0}):(G, G_{0}) \to (G^{\prime}, G_{0}^{\prime})$ is a
 groupoid morphism, then:\\[-0.4cm]
\[
f \circ \varepsilon = \varepsilon^{\prime}  \circ
f_{0}~~~\hbox{and}~~~ f \circ i = i^{\prime}  \circ
f.~~~\hfill\Box\\[-0.2cm]
\]
\end{Prop}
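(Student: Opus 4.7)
The plan is to prove the two identities separately, deducing the first from the idempotency of units and the second from uniqueness of inverses in the target groupoid.

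For the identity $f\circ\varepsilon = \varepsilon'\circ f_0$, I would fix $u\in G_0$ and exploit that $\varepsilon(u)$ is idempotent. By Proposition 2.1(iii) we have $\varepsilon(u)\cdot\varepsilon(u) = \varepsilon(u)$, and applying condition (2) of Definition 2.2 gives $f(\varepsilon(u))\cdot f(\varepsilon(u)) = f(\varepsilon(u))$ in $G'$. Multiplying on the right by $(f(\varepsilon(u)))^{-1}$, which exists in $G'$ and is composable with $f(\varepsilon(u))$ by axiom (G3), yields $f(\varepsilon(u)) = \varepsilon'(\alpha'(f(\varepsilon(u))))$. Using condition (1) of the morphism definition together with the relation $\alpha\circ\varepsilon = Id_{G_0}$ from Proposition 2.2, the argument of $\varepsilon'$ simplifies: $\alpha'(f(\varepsilon(u))) = f_0(\alpha(\varepsilon(u))) = f_0(u)$, and hence $f(\varepsilon(u)) = \varepsilon'(f_0(u))$, as required.

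For the identity $f\circ i = i'\circ f$, I would take $x\in G$ and apply $f$ to the two inverse equations $x\cdot x^{-1} = \varepsilon(\alpha(x))$ and $x^{-1}\cdot x = \varepsilon(\beta(x))$ from axiom (G3). Using condition (2) of Definition 2.2 and the first part of the proposition just established, these become
\[
f(x)\cdot f(x^{-1}) = \varepsilon'(f_0(\alpha(x))) = \varepsilon'(\alpha'(f(x))),
\]
\[
f(x^{-1})\cdot f(x) = \varepsilon'(f_0(\beta(x))) = \varepsilon'(\beta'(f(x))),
\]
where condition (1) is used to rewrite $f_0\circ\alpha$ as $\alpha'\circ f$ and similarly for $\beta$. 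These are precisely the defining relations that characterize the inverse of $f(x)$ in $G'$, so by uniqueness of inverses in the groupoid $G'$ we conclude $f(x^{-1}) = (f(x))^{-1}$, i.e. $(f\circ i)(x) = (i'\circ f)(x)$.

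The only non-mechanical ingredient is the uniqueness of inverses in a groupoid; if the reader does not regard it as immediate, I would insert a one-line justification: if $y\in G'$ satisfies $f(x)\cdot y = \varepsilon'(\alpha'(f(x)))$ and $y\cdot f(x) = \varepsilon'(\beta'(f(x)))$, then $y = \varepsilon'(\alpha'(y))\cdot y = (f(x))^{-1}\cdot f(x)\cdot y = (f(x))^{-1}\cdot \varepsilon'(\alpha'(f(x))) = (f(x))^{-1}$, using the unit axiom (G2) and associativity (G1). Apart from this, the proof is a direct application of the morphism axioms together with Propositions 2.1 and 2.2, so no serious obstacle is expected.
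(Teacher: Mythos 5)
Your proof is correct: both identities follow cleanly, the idempotency argument for $f\circ\varepsilon=\varepsilon'\circ f_0$ and the uniqueness-of-inverses argument for $f\circ i=i'\circ f$ are sound, and you rightly supply the one-line uniqueness lemma rather than taking it for granted. The paper itself gives no proof of this proposition (it is quoted from the reference and closed with a box), so there is nothing to compare against; your argument is the standard one and fills that gap adequately.
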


For more details on groupoids and their applications, see
\cite{brown, mack, rare, ivan99}. Special properties of some
classes of Brandt groupoids are presented in \cite{ivan02} and
\cite{vpop}.

\section{Basic properties in category of group-groupoids}

 In this section we refer to notion of group-groupoid \cite{brspen}.

A group structure on a nonempty set is regarded as an universal
algebra determined by a binary operation, a nullary operation
 and an unary operation.

Let $ (G, \alpha, \beta, m, \varepsilon, i, G_{0})$ be a groupoid.
We suppose that on $G$ is defined a group structure $\omega:
G\times G \to G,~(x,y) \to \omega (x,y):=x\oplus y$. Also, we
suppose that on $G_{0}$ is defined a group structure $\omega_{0}:
G_{0}\times G_{0} \to G_{0},~(u,v) \to \omega_{0}(u,v):=u\oplus
v$. The unit element of $G$ (resp., $G_{0}$) is $e$ (resp.,
$e_{0}$); that is $\nu : \{\lambda\} \to G,~\lambda \to
\nu(\lambda):= e $ (resp., $\nu_{0} :\{\lambda\} \to
G_{0},~\lambda \to \nu_{0}(\lambda):= e_{0}$) (here $\{\lambda\}$
is a singleton). The inverse of $x\in G$ (resp., $u\in G_{0}$)  is
denoted by $\bar{x}$ (resp., $\bar{u}$ ); that is $\sigma :G \to
G,~x \to \sigma (x):=\bar{x}$ (resp., $\sigma_{0}:G_{0} \to
G_{0},~u \to \sigma_{0}(u):= \bar{u}$).

\begin{defn}
{\rm (\cite{brspen})  A {\it group-groupoid} or {\it $~{\cal
G}-$groupoid}, is a groupoid $ (G, G_{0})$ such that the following
conditions hold:

$(i)~~ (G, \omega, \nu, \sigma)$ and $ (G_{0}, \omega_{0},
\nu_{0}, \sigma_{0})$ are groups.

$(ii)~~$ The maps $~(\omega, \omega_{0}):(G\times G, G_{0}\times
G_{0}) \to (G,
  G_{0}),~(\nu, \nu_{0}):(\{\lambda \}, \{\lambda \}) \to (G, G_{0}) $ and $ (\sigma, \sigma_{0}): (G, G_{0}) \to (G,
  G_{0}) $ are groupoid morphisms.}\hfill$\Box$
\end{defn}

We shall denote a group-groupoid by $ (G, \alpha, \beta, m, \e, i,
\oplus, G_{0})$.

\begin{Prop}
If  $~(G, \alpha, \beta, m, \e, i, \oplus,  G_{0})$ is a
group-groupoid, then:

$(i)~ $ the multiplication $m$ and binary operation $\omega$ are
compatible, that is:\\[-0.3cm]
\begin{equation}
(x\cdot y)\oplus (z\cdot t)= (x\oplus z)\cdot (y\oplus
t),~~~(\forall) (x,y), (z,t)\in G_{(2)};\label{(3.1)}
\end{equation}

$(ii)~ \alpha, \beta: (G, \oplus) \to (G_{0}, \oplus),~ i: (G,
\oplus) \to (G, \oplus)~$  and $~\e: (G_{0}, \oplus)\to (G,
\oplus) $ are morphisms of groups; i.e., for all $ x,y\in G $ and
$u,v\in G_{0},$ we have:\\[-0.3cm]
\begin{equation}
\alpha(x\oplus y)= \alpha(x)\oplus \alpha(y),~~~\beta(x\oplus y)=
\beta(x)\oplus \beta(y),~~~i(x\oplus y)= i(x)\oplus
i(y);\label{(3.2)}
\end{equation}
\begin{equation}
\e(u\oplus v)= \e(u)\oplus \e(v);\label{(3.3)}
\end{equation}

$(iii)~$ the multiplication $m$ and the unary operation $\sigma$
are compatible, that is:\\[-0.3cm]
\begin{equation}
\sigma(x\cdot y)= \sigma(x)\cdot \sigma(y),~~~(\forall) (x,y)\in
G_{(2)}.\label{(3.4)}
\end{equation}
\end{Prop}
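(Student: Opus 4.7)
The plan is to extract each of the three parts from one of the three groupoid-morphism hypotheses in Definition 3.1(ii), using Example 2.1(iv) to unpack the product groupoid $G\times G$.

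For (i), I would simply apply condition (2) of Definition 2.3 to the morphism $(\omega,\omega_0)\colon (G\times G,G_0\times G_0)\to (G,G_0)$. By the product description, $((x,z),(y,t))$ lies in $(G\times G)_{(2)}$ iff $(x,y),(z,t)\in G_{(2)}$, and its product there is $(x\cdot y, z\cdot t)$; the morphism identity $\omega(m_{G\times G}((x,z),(y,t)))=m(\omega(x,z),\omega(y,t))$ then reads exactly as (3.1). For the $\alpha$- and $\beta$-identities in (ii), condition (1) of Definition 2.3 applied to the same morphism yields $\alpha\circ\omega=\omega_0\circ\alpha_{G\times G}$ and the analogous relation for $\beta$, which, evaluated at $(x,y)$, are precisely $\alpha(x\oplus y)=\alpha(x)\oplus\alpha(y)$ and $\beta(x\oplus y)=\beta(x)\oplus\beta(y)$.

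The identities involving $\varepsilon$ and $i$ I would then derive as consequences of (i). For (3.3), specialise (3.1) to $(\varepsilon(u),\varepsilon(u),\varepsilon(v),\varepsilon(v))$, which is composable by Proposition 2.1(iii); the left-hand side collapses to $\varepsilon(u)\oplus\varepsilon(v)$, so this element is a $\cdot$-idempotent. A short calculation using (G1)--(G3) shows that every $\cdot$-idempotent $e$ satisfies $e=\varepsilon(\alpha(e))$, hence $\varepsilon(u)\oplus\varepsilon(v)=\varepsilon(w)$ for some $w\in G_0$; applying $\alpha$, already known to preserve $\oplus$, forces $w=u\oplus v$. For the $i$-identity in (3.2), substitute $(x,i(x),z,i(z))$ into (3.1); the left-hand side reduces via (G3) and the just-proved (3.3) to $\varepsilon(\alpha(x\oplus z))$, while the right-hand side is $(x\oplus z)\cdot(i(x)\oplus i(z))$, and the uniqueness of the groupoid inverse of $x\oplus z$ yields $i(x\oplus z)=i(x)\oplus i(z)$.

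Finally, (iii) is immediate from condition (2) of Definition 2.3 applied to $(\sigma,\sigma_0)\colon (G,G_0)\to (G,G_0)$; composability of $(\sigma(x),\sigma(y))$ whenever $(x,y)\in G_{(2)}$ is guaranteed by condition (1) for the same morphism, so $\sigma(m(x,y))=m(\sigma(x),\sigma(y))$ is exactly (3.4). The only step that goes beyond pure translation of the morphism axioms is the characterisation of groupoid units as the $\cdot$-idempotents used in deriving (3.3); everything else is bookkeeping, so this is where I expect the minor obstacle to lie.
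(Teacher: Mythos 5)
Your proposal is correct, and for parts (i), (iii) and the $\alpha$-, $\beta$-identities of (ii) it coincides with the paper's argument: both simply unpack conditions (1) and (2) of Definition 2.3 for the morphisms $(\omega,\omega_0)$ and $(\sigma,\sigma_0)$, using the product-groupoid description from Example 2.1(iv). Where you genuinely diverge is in the $\varepsilon$- and $i$-identities of (ii). The paper obtains $\omega\circ(\e\times\e)=\e\circ\omega_0$ and $i\circ\omega=\omega\circ(i\times i)$ in one line by citing its Proposition 2.3 (a groupoid morphism automatically commutes with the unit and inversion maps), applied to $(\omega,\omega_0)$. You instead re-derive these facts from the interchange law: you show $\e(u)\oplus\e(v)$ is a $\cdot$-idempotent, use the standard fact that idempotents in a groupoid are exactly the units $\e(\alpha(e))$ (which does follow from a one-line cancellation with $e^{-1}$, so the ``obstacle'' you flag is harmless), and then get the $i$-identity from uniqueness of inverses after checking $\alpha(i(x)\oplus i(z))=\beta(x\oplus z)$. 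Your route is longer but self-contained --- it effectively re-proves the special case of the paper's Proposition 2.3 that is needed --- while the paper's route is shorter at the cost of leaning on that external lemma. Both are sound; just make sure, if you write yours up, to state and prove the idempotent-implies-unit step explicitly, since it is the only ingredient not already on the page.
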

\begin{proof}
By Definition 2.3, since $ (\omega, \omega_{0})$ is a groupoid morphism it follows that:\\[0.1cm]
$(1)~~~\alpha\circ \omega = \omega_{0}\circ (\alpha\times
\alpha)~$ and $~\beta\circ \omega = \omega_{0}\circ (\beta\times
\beta);~$\\[0.1cm]
$(2)~~~\omega(m_{G\times G}((x,y),(z,t)))=m_{G}(\omega(x,z),
\omega(y,t)),~ (\forall)~(x,y), (z,t)\in G_{(2)}.$

$(i)~$ We have\\[0.1cm]
$\omega(m_{G\times G}((x,y),(z,t)))=\omega(m_{G}(x,y),m_{G}(z,t))=
\omega(x\cdot y, z\cdot t) = (x\cdot y)\oplus (z\cdot t)~$
and\\[0.1cm]
$m_{G}(\omega(x,z), \omega(y,t))= m_{G}(x\oplus z, y\oplus t)=
(x\oplus z)\cdot (y\oplus t).$

Using $(2)$ one obtains $ (x\cdot y)\oplus (z\cdot t) = (x\oplus
z)\cdot (y\oplus t),$ and $(3.1)$ holds.

$(ii)~$ For each $(x,y)\in G\times G$, we have\\[0.1cm]
$\alpha (\omega(x,y)) =\alpha (x\oplus y)~$ and $~\omega_{0}(
(\alpha\times \alpha)(x,y))=\omega_{0}(\alpha(x), \alpha (y))=
\alpha(x)\oplus \alpha (y).$

According to the first equality $(1)$, it follows $ \alpha
(x\oplus y)=\alpha(x)\oplus \alpha (y),$ and the first relation of
$(3.2)$ holds. Similarly, we prove that the second relation of
$(3.2)$ holds.

Since $(\omega, \omega_{0})$ is a groupoid morphism, by Proposition 2.3, it follows\\[0.1cm]
$(3)~~~\omega\circ (\e\times \e)= \e \circ \omega_{0}~$ and $~
i\circ \omega = \omega\circ (i\times i).$

For each $(x,y)\in G\times G$, we have\\[0.1cm]
$i (\omega (x,y)) = i(x\oplus y)~$ and $~\omega ((i\times i)(x,y))
= \omega (i(x), i(y)) = i(x)\oplus i(y).$

Using now the second  equality $(3)$, it follows $ i(x\oplus y)=
i(x)\oplus i(y),$ and the third relation $(3.2)$ holds.
 For each $(u,v)\in G_{0}\times G_{0}$, we have\\[0.1cm]
$\omega((\e\times \e)(u,v))= \omega(\e(u),\e(v))= \e(u)\oplus
\e(v)~$ and $~\e(\omega_{0}(u,v))= \e(u\oplus v).$

 From the first equality $(3)$, it follows $\e(u\oplus v)= \e(u)\oplus
 \e(v).$ Hence, $(3.3)$ holds.

$(iii)~$ Since $~(\sigma, \sigma_{0}) $ is a groupoid morphism,
for all $(x,y)\in G_{(2)}$ we have\\[0.1cm]
$\sigma(m(x,y))=m(\sigma(x), \sigma(y))$; i.e., $\sigma(x\cdot
y)=\sigma(x)\cdot \sigma(y).$
 Hence  $(3.4)$ holds.
\end{proof}

The relation $(3.1)$ (resp., $(3.4)$) is called the {\it
interchange law} between groupoid multiplication $m$ and group
operation $\omega$ (resp., $\sigma$).

 From Proposition 3.1 follows the following corollary.
\begin{Cor}
Let $ (G, \alpha, \beta, m, \e, i, \oplus,  G_{0})$ be a ${\cal
G}-$groupoid. Then:

$(i)~~~$ The source and target $ \alpha, \beta : G \to G_{0} $ are
group epimorphisms, and\\[-0.3cm]
\begin{equation}
\alpha(e)=\beta(e)=e_{0},~~~~~\alpha(\bar{x})=\overline{\alpha(x)}~~\hbox{and}~~
\beta(\bar{x})=\overline{\beta(x)},~~ (\forall)~x\in
G;\label{(3.5)}
\end{equation}

$(ii)~~~$ The inclusion map $ \varepsilon: G_{0} \to G $ is a
group monomorphism, and\\[-0.3cm]
\begin{equation}
\varepsilon(e_{0})=e,~~~\e(\bar{u})=\overline{\e(u)},~(\forall)~u\in
G_{0};\label{(3.6)}
\end{equation}

 $(iii)~~$ The inversion $ i : G \to G $ is a group
automorphism, and\\[-0.3cm]
\begin{equation}
i(e)=e,~~~ i(\bar{x})=\overline{i(x)},~(\forall)~x\in G;
\label{(3.7)}
\end{equation}

$(iv)~~$ For all $x, y\in G$, we have\\[-0.3cm]
\begin{equation}
\sigma(x\oplus y)= \sigma(y)\oplus \sigma(x)~~~\hbox{and}~~~
\sigma(\sigma ({x}))=x.~~~\hfill\Box \label{(3.8)}
\end{equation}
\end{Cor}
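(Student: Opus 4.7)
The plan is to derive all four parts directly from the group-homomorphism equations already obtained in Proposition 3.1(ii), combining them with the surjectivity, injectivity, and involution properties that the groupoid axioms force upon $\alpha$, $\beta$, $\varepsilon$ and $i$, together with the elementary fact that every group homomorphism sends identity to identity and inverse to inverse.

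For part (i), the first two identities in (3.2) say that $\alpha$ and $\beta$ commute with $\oplus$, so each is a group homomorphism $(G,\oplus)\to(G_0,\oplus)$. Surjectivity of $\alpha$ and $\beta$ is already built into Definition 2.1, so they are group epimorphisms. Then $\alpha(e)=e_0$ drops out of the usual one-line argument: applying $\alpha$ to $e\oplus e=e$ gives $\alpha(e)\oplus\alpha(e)=\alpha(e)$, hence $\alpha(e)=e_0$; and applying $\alpha$ to $x\oplus\bar x=e$ gives $\alpha(x)\oplus\alpha(\bar x)=e_0$, whence $\alpha(\bar x)=\overline{\alpha(x)}$. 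The computation for $\beta$ is word-for-word the same.

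Parts (ii) and (iii) follow exactly the same pattern. For (ii), $\varepsilon$ is injective by Definition 2.1 and is a group homomorphism by (3.3), so it is a group monomorphism; the identities $\varepsilon(e_0)=e$ and $\varepsilon(\bar u)=\overline{\varepsilon(u)}$ then come from the same preservation-of-identity-and-inverse argument as above. For (iii), the third identity in (3.2) says $i$ is a group homomorphism $(G,\oplus)\to(G,\oplus)$, and Proposition 2.2 gives $i\circ i=Id_G$, so $i$ is bijective and therefore a group automorphism; the relations $i(e)=e$ and $i(\bar x)=\overline{i(x)}$ follow once more from being a group homomorphism.

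Part (iv) does not even need the groupoid structure: $\sigma$ is by definition the inversion in the group $(G,\oplus)$, so $\sigma(x\oplus y)=\sigma(y)\oplus\sigma(x)$ and $\sigma(\sigma(x))=x$ are the familiar identities $(x\oplus y)^{-1}=\bar y\oplus\bar x$ and $\overline{\bar x}=x$ valid in any group. There is no real obstacle in the corollary; the only thing to keep straight throughout is which of the two operations $\cdot$ and $\oplus$ is being used, since the interchange law (3.1) plays no role here — only the homomorphism properties established in Proposition 3.1(ii) are invoked.
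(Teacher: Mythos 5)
Your proof is correct and follows exactly the route the paper intends: the paper states the corollary as an immediate consequence of Proposition 3.1, and you simply fill in the standard details (homomorphisms preserve identities and inverses, surjectivity/injectivity from Definition 2.1, $i\circ i=Id_{G}$ from Proposition 2.2, and part (iv) being pure group theory). Nothing is missing.
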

We say that the group-groupoid $ (G, \alpha, \beta, m, \e, i,
\oplus, G_{0})$ is a {\it commutative group-groupoid}, if the
groups $G$ and $G_{0}$ are commutative.
\begin{Cor}
Let $ (G, \alpha, \beta, m, \e, i, \oplus,  G_{0})$ be a
commutative ${\cal G}-$groupoid. Then:\\[-0.3cm]
\[
\overline{x\oplus y}= \bar{x} \oplus \bar{y}, ~(\forall)~ x,y \in
G.
\]
\end{Cor}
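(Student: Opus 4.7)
The plan is very direct: this corollary reduces to the identity already established in part (iv) of Corollary 3.1 together with the commutativity hypothesis on $(G,\oplus)$.

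First, I would invoke relation $(3.8)$ from Corollary 3.1, which gives $\sigma(x\oplus y)=\sigma(y)\oplus \sigma(x)$ for every $x,y\in G$, without any commutativity assumption. Rewriting with the bar notation for the group inverse, this says $\overline{x\oplus y}=\bar{y}\oplus \bar{x}$.

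Next, since by hypothesis $(G,\oplus)$ is a commutative group, I can swap the two factors on the right-hand side to obtain $\bar{y}\oplus \bar{x}=\bar{x}\oplus \bar{y}$. Chaining the two equalities yields $\overline{x\oplus y}=\bar{x}\oplus \bar{y}$, which is exactly what we need.

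There is essentially no obstacle here: the first identity $(3.8)$ is the usual ``inverse reverses order'' property in any group, and the commutativity then erases the order reversal. The only thing worth emphasizing is that the hypothesis of commutativity is used only for the group operation $\oplus$ on $G$; the commutativity of $G_{0}$ plays no role in this particular argument, although it is part of the definition of a commutative $\mathcal{G}$-groupoid.
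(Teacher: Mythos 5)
Your argument is correct and is exactly the paper's route: the paper also derives this corollary as an immediate consequence of relation $(3.8)$, and you have merely spelled out the step where commutativity of $(G,\oplus)$ undoes the order reversal. Your closing remark that only the commutativity of $G$ (not of $G_{0}$) is used is accurate and a nice observation.
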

\begin{proof}
It is an immediate consequence of the relation $(3.8).$
\end{proof}
\begin{Prop}
If $~(G, \alpha, \beta, m, \e, i, \oplus,  G_{0})$ is a ${\cal
G}-$groupoid, then:\\[-0.3cm]
\begin{equation}
e\cdot y = y,~~~(\forall) y\in
\alpha^{-1}(e_{0})~~~\hbox{and}~~~x\cdot e = x,~~~(\forall) x\in
\beta^{-1}(e_{0});\label{(3.9)}
\end{equation}
\begin{equation}
x\cdot (y\oplus t) = x\cdot y \oplus t,~~~(\forall) (x,y)\in
G_{(2)}~\hbox{and}~~ t\in \alpha^{-1}(e_{0});\label{(3.10)}
\end{equation}
\begin{equation}
(x\oplus z)\cdot y = x\cdot y \oplus z,~~~(\forall) (x,y)\in
G_{(2)}~\hbox{and}~~ z\in \beta^{-1}(e_{0}).\label{(3.11)}
\end{equation}
\end{Prop}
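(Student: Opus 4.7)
The plan is to exploit that $e=\varepsilon(e_{0})$ (from Corollary 3.1(ii)), so the element $e$ is simultaneously the neutral element of the group $(G,\oplus)$ and a groupoid unit sitting in the fibre over $e_{0}$. With this in hand, the interchange law (3.1) together with the groupoid unit axiom (G2) will reduce everything to a couple of substitutions.

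For (3.9), assume $y\in\alpha^{-1}(e_{0})$. Then $\varepsilon(\alpha(y))=\varepsilon(e_{0})=e$ by (3.6), and the unit axiom (G2) gives $e\cdot y=\varepsilon(\alpha(y))\cdot y=y$. The dual identity $x\cdot e=x$ for $x\in\beta^{-1}(e_{0})$ is obtained by the symmetric argument using $\varepsilon(\beta(x))=\varepsilon(e_{0})=e$.

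For (3.10), I would first check composability of $(e,t)$: since $\beta(e)=e_{0}$ by (3.5) and $\alpha(t)=e_{0}$ by hypothesis, the pair $(e,t)$ lies in $G_{(2)}$. Now apply the interchange law (3.1) to $(x,y)$ and $(e,t)$:
\[
(x\oplus e)\cdot(y\oplus t)=(x\cdot y)\oplus(e\cdot t).
\]
On the left, $x\oplus e=x$ because $e$ is the group unit of $(G,\oplus)$; on the right, $e\cdot t=t$ by the first half of (3.9). This yields (3.10). The proof of (3.11) is entirely parallel: now $(z,e)\in G_{(2)}$ because $\beta(z)=e_{0}=\alpha(e)$, and applying (3.1) to $(x,y)$ and $(z,e)$ gives
\[
(x\oplus z)\cdot(y\oplus e)=(x\cdot y)\oplus(z\cdot e),
\]
which after the simplifications $y\oplus e=y$ and $z\cdot e=z$ (second half of (3.9)) is precisely (3.11).

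There is not really a hard step: the only subtle point is bookkeeping, namely noticing that the interchange law can be applied because the $\alpha$- or $\beta$-fibre hypothesis on $t$ (resp.\ $z$) is exactly what guarantees that the ``padding'' pair involving $e$ is composable. The proof is otherwise just a combination of Proposition 3.1 and Corollary 3.1.
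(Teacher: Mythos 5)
Your proof is correct and follows essentially the same route as the paper: establish (3.9) from $\varepsilon(e_{0})=e$ and the unit axiom (G2), then obtain (3.10) and (3.11) by substituting $e$ for $z$ (resp.\ $t$) in the interchange law (3.1) and simplifying with the group identity and (3.9). Your explicit verification that the pairs $(e,t)$ and $(z,e)$ are composable is the same composability check the paper makes, just stated a bit more carefully.
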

\begin{proof}
If $y\in\alpha^{-1}(e_{0})$, then $\alpha(y)=e_{0}$. We have
$\beta(\varepsilon(e_{0}))=e_{0}$, since $\beta\circ \varepsilon =
Id_{G_{0}}$. So $(\varepsilon(e_{0}),y)\in G_{(2)}$. Using (3.6)
and the condition $(G2)$ from Definition 2.1, one obtains $e\cdot
y = \varepsilon(e_{0})\cdot y = \varepsilon(\alpha(y))\cdot y =
y.~$ Hence the first relation of $(3.9)$ holds. Similarly, we
verify the second equality of $(3.9)$.

For to prove the relation $(3.10)$ we apply the interchange law
$(3.1)$ and $(3.9)$. Indeed, if in $(3.1)$ we replace $z$ with
$e$, one obtains $~(x\cdot y)\oplus (e\cdot t)= (x\oplus e)\cdot
(y\oplus t),$ for all $(x,y), (e,t)\in G_{(2)}.$ It follows
$(x\cdot y)\oplus t= x\cdot (y\oplus t),$ since $x\oplus e=x,
\beta(e)=e_{0}$ and $t\in \alpha^{-1}(e_{0}).$ Hence, the relation
$(3.10)$ holds.

Similarly, if in $(3.1)$ we replace $t$ with $e$, one obtains
$~(x\cdot y)\oplus (z\cdot e)= (x\oplus y)\cdot (y\oplus e),$ for
all $(x,y), (z,e)\in G_{(2)}.$ It follows $(x\cdot y)\oplus z=
(x\oplus z)\cdot y,$ since $y\oplus e= y, \alpha(e)=e_{0}$ and
$z\in \beta^{-1}(e_{0}).$ Hence, the relation $(3.11)$ holds.
\end{proof}
\begin{Prop}
{\rm (\cite{brmuc})} If $~(G, \alpha, \beta, m, \e, i, \oplus,
G_{0})$
is a ${\cal G}-$groupoid, then:\\[-0.2cm]
\begin{equation}
x\cdot y = x \oplus \overline{\e(\beta(x))} \oplus y, ~~~(\forall)
(x, y) \in G_{(2)};\label{(3.12)}\\[-0.1cm]
\end{equation}
\begin{equation}
x^{-1} = \e(\alpha(x))  \oplus \bar{x} \oplus \e(\beta(x)),
~~~(\forall) x \in G.\label{(3.13)}
\end{equation}
\end{Prop}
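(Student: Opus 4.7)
The plan is to prove (3.12) by a single application of the auxiliary identity (3.10), and then to obtain (3.13) by feeding (3.12) into the groupoid identity $x \cdot x^{-1} = \e(\alpha(x))$ and solving in the group $(G,\oplus)$.

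For (3.12), set $u := \beta(x) = \alpha(y)$. The decisive first observation is that $t := \overline{\e(u)} \oplus y$ lies in the source-fibre $\alpha^{-1}(e_0)$: because $\alpha$ is a group morphism by (3.2) and $\alpha(\e(u)) = u$, one computes $\alpha(t) = \overline{u} \oplus u = e_0$. Since $(x,\e(u)) \in G_{(2)}$ by (G2), equation (3.10) applies to the composable pair $(x,\e(u))$ with this particular $t$, yielding
\[
x \cdot \e(u) \oplus t \;=\; x \cdot (\e(u) \oplus t).
\]
On the left, $x \cdot \e(u) = x$ by (G2), so the left side rewrites as $x \oplus \overline{\e(u)} \oplus y$. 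On the right, $\e(u) \oplus t = \e(u) \oplus \overline{\e(u)} \oplus y = y$ in $(G,\oplus)$, so the right side collapses to $x \cdot y$. This gives (3.12).

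For (3.13), I would apply (3.12) to the composable pair $(x, x^{-1})$, whose groupoid product is $\e(\alpha(x))$, to produce the group-theoretic identity
\[
\e(\alpha(x)) \;=\; x \oplus \overline{\e(\beta(x))} \oplus x^{-1}.
\]
Isolating $x^{-1}$ in the (possibly non-abelian) group $(G,\oplus)$ by left-cancelling $x \oplus \overline{\e(\beta(x))}$, and using $\overline{a \oplus b} = \bar{b} \oplus \bar{a}$ together with $\overline{\overline{\e(\beta(x))}} = \e(\beta(x))$, yields the stated expression for $x^{-1}$.

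The main obstacle is the first step in (3.12): recognising that $\overline{\e(u)} \oplus y$ sits in $\alpha^{-1}(e_0)$, which is precisely what makes (3.10) applicable in the desired direction. Once that placement is noticed, both identities follow by elementary rearrangement, the only subtlety being careful bookkeeping of the order of factors and inverses in (3.13), since $(G,\oplus)$ need not be commutative.
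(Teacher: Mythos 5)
Your proof of $(3.12)$ is correct and takes a slightly more economical route than the paper: the paper re-runs the interchange law $(3.1)$ on the decomposition $x\cdot y=((x\oplus\overline{\e(v)})\oplus\e(v))\cdot(e\oplus y)$ together with $(3.9)$, whereas you reuse the already-established identity $(3.10)$, applied to the composable pair $(x,\e(u))$ with $t=\overline{\e(u)}\oplus y\in\alpha^{-1}(e_{0})$. Both arguments hinge on the same placement observation; yours simply packages the interchange-law work inside the earlier proposition.

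Your proof of $(3.13)$, however, has a genuine gap. From $\e(\alpha(x))=x\oplus\overline{\e(\beta(x))}\oplus x^{-1}$, left-cancelling $x\oplus\overline{\e(\beta(x))}$ gives
\[
x^{-1}=\overline{\bigl(x\oplus\overline{\e(\beta(x))}\bigr)}\oplus\e(\alpha(x))=\e(\beta(x))\oplus\bar{x}\oplus\e(\alpha(x)),
\]
which has the outer factors in the opposite order from the claimed $x^{-1}=\e(\alpha(x))\oplus\bar{x}\oplus\e(\beta(x))$. In a non-abelian $(G,\oplus)$ these are not syntactically the same expression, so the step ``yields the stated expression'' is exactly where the argument breaks; starting instead from $x^{-1}\cdot x=\e(\beta(x))$ and applying $(3.12)$ to the pair $(x^{-1},x)$ produces the same reversed formula, so switching sides does not help. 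The two expressions are in fact equal --- both are $x^{-1}$ --- but that equality is the content of what must be proved. To repair the argument you could derive the dual of $(3.12)$ from $(3.11)$, namely $x\cdot y=y\oplus\overline{\e(\beta(x))}\oplus x$ (apply $(3.11)$ to the pair $(\e(u),y)$ with $z=\overline{\e(u)}\oplus x\in\beta^{-1}(e_{0})$, where $u=\beta(x)=\alpha(y)$), then put $y=x^{-1}$ and right-cancel $\overline{\e(\beta(x))}\oplus x$; alternatively, do what the paper does: set $a:=\e(\alpha(x))\oplus\bar{x}\oplus\e(\beta(x))$, check $\alpha(a)=\beta(x)$ and $\beta(a)=\alpha(x)$, verify $x\cdot a=\e(\alpha(x))$ and $a\cdot x=\e(\beta(x))$ via the interchange law and $(3.9)$, and conclude $a=x^{-1}$ by uniqueness of inverses in a groupoid.
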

\begin{proof}
Fix  $x,y\in G$ and introduce the notations $u:=\alpha(x),
~v:=\beta(x) $ and $w:=\beta(y).$ Consider $(x,y)\in G_{(2)}$.
Then $\beta(x)=\alpha(y)=v.$ Since $ \oplus$ is associative, we
have\\[0.1cm]
$(1)~~~ x\cdot y =( (x\oplus \overline{\e(v)}~)\oplus \e(v)) \cdot
(e \oplus y).$

Using the fact that $\beta$ is a group morphism and
$(3.5)$ we have\\[0.1cm]
 $\beta (x \oplus
\overline{\e(v)} )=\beta(x)\oplus \beta( \overline{\e(v)} )= v
\oplus \overline{\beta (\e(v))}=v \oplus \bar{v}=e_{0}=\alpha(e).$
Then the product $(x \oplus \overline{\e(v)}~)\cdot e$ is defined
and $x \oplus \overline{\e(v)}\in \beta^{-1}(e_{0}).$ Applying
$(3.9)$ one obtains $(x \oplus \overline{\e(v)}~)\cdot e=x \oplus
\overline{\e(v)}.$
 On the other hand, we have $\e(v)\cdot y= \e(\alpha(y))\cdot y
 =y.$ Applying now the interchange law $(3.1),$ from $(1)$ implies
 that\\[0.1cm]
$x\cdot y =( (x\oplus \overline{\e(v} ) \cdot e)\oplus (
\e(v)\cdot y )~~~\Rightarrow~~~ x\cdot y = x \oplus
\overline{\e(v)}\oplus y ~~~\Rightarrow~~~x\cdot y = x \oplus
\overline{\e(\beta(x))}.$

For the prove $(3.13)$ denote $ a:= \e(\alpha(x)) \oplus \bar{x}
\oplus \e(\beta(x))$. Then $ a:= \e(u) \oplus \bar{x} \oplus
\e(v)$. Applying the fact that $\alpha$ is a group morphism and
$(3.5)$,  we have\\[0.1cm]
$\alpha(a) = u\oplus \overline{\alpha (x)} \oplus v = u \oplus
\bar{u}\oplus v= v.$ Then the product $x\cdot a $ is defined. We
have\\[0.1cm]
$(2)~~~ x\cdot a =( e\oplus x)\cdot ( (\e(u)\oplus \bar{x}) \oplus
\e(v)).$

Applying the interchange law $(3.1)$ and $(3.9)$, from $(2)$ we have\\[0.1cm]
$x\cdot a =( e\cdot (\e(u)\oplus \bar{x}))\oplus (x\cdot \e(v))
~~~\Rightarrow~~~ x\cdot a = \e(u)\oplus \bar{x} \oplus x
~~~\Rightarrow~~~x\cdot a = \e(u).$

Hence, $ x\cdot a = \e(\alpha(x)).$  Similarly, we verify that $
a\cdot x = \e(\beta(x)).$ Then $ a = x^{-1}$ and the relation
$(3.13)$ holds.
\end{proof}
\begin{Cor}
If $ (G, \alpha, \beta, m, \e, i, \oplus,  G_{0})$ is a ${\cal
G}-$groupoid, then:\\[-0.2cm]
\begin{equation}
x\cdot y = x \oplus y ~~~\hbox{and}~~~x^{-1} =
\bar{x},~~~~~(\forall) x, y \in G(e_{0}).\label{(3.14)}
\end{equation}
\end{Cor}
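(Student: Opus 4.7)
The plan is to apply Proposition 3.2 directly, specializing formulas (3.12) and (3.13) to the isotropy group $G(e_0) = \alpha^{-1}(e_0) \cap \beta^{-1}(e_0)$. For any $x, y \in G(e_0)$ one has $\alpha(x) = \beta(x) = \alpha(y) = \beta(y) = e_0$, which will cause the $\e$-terms appearing in (3.12) and (3.13) to collapse to the group unit $e$.

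First I would handle the product identity. Since $x, y \in G(e_0)$, the pair $(x,y)$ is composable (both source/target at $e_0$), so (3.12) applies and gives $x \cdot y = x \oplus \overline{\e(\beta(x))} \oplus y$. Using $\beta(x) = e_0$ and the relation $\e(e_0) = e$ from (3.6), we get $\overline{\e(\beta(x))} = \bar{e} = e$. Substituting and using that $e$ is the unit of $(G, \oplus)$ yields $x \cdot y = x \oplus y$, which is the first part of (3.14).

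Next I would apply (3.13) to obtain $x^{-1} = \e(\alpha(x)) \oplus \bar{x} \oplus \e(\beta(x))$. With $\alpha(x) = \beta(x) = e_0$ and $\e(e_0) = e$, this reduces to $x^{-1} = e \oplus \bar{x} \oplus e = \bar{x}$, giving the second part of (3.14).

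There is no real obstacle here: the corollary is a straightforward specialization of Proposition 3.2 once one observes that on $G(e_0)$ the identity bisection $\e$ outputs the group unit $e$. The only bookkeeping point worth stating explicitly is that $\e(e_0) = e$ is provided by (3.6), and that $\bar{e} = e$ in the group $(G, \oplus)$, so all the correction terms in (3.12) and (3.13) vanish.
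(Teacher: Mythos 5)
Your proposal is correct and follows exactly the paper's own argument: both specialize (3.12) and (3.13) to $G(e_{0})$, using $\e(e_{0})=e$ from (3.6) and $\bar{e}=e$ so that the correction terms vanish. No differences worth noting.
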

\begin{proof}
Let $x,y \in G(e_{0})$. We have $(x,y)\in G_{(2)}$, since
$\beta(x)=\alpha(y)= e_{0}$. From $(3.12)$, we have $x\cdot y
=x\oplus y, $ since $\e(\beta(x))= \e(e_{0})=e $ and $ \bar{e}=e$.
Hence, the first equality from $(3.14)$ holds. Replacing in
$(3.13)$, $\e(\alpha(x))= \e(\beta(x))=e$ one obtains $x^{-1} =
\bar{x}$. Therefore, the second equality from $(3.14)$ holds.
\end{proof}
\begin{Th}
Let $ (G, \alpha, \beta, m, \e, i,  G_{0})$ be a groupoid. If the
following conditions are satisfied:

$(i)~~(G, \oplus)$ and $ (G_{0}, \oplus)$ are groups;

$(ii)~ \alpha, \beta: (G, \oplus)\to (G_{0}, \oplus),~\e: (G_{0},
\oplus) \to (G, \oplus)~$ and $~i: (G, \oplus) \to (G, \oplus) $
are morphisms of groups;

 $(iii)~$ the interchange law $(3.1)$ between  the operations $ m $ and  $ \omega $
 holds,\\
 then $ (G, \alpha, \beta, m, \e, i, \oplus,  G_{0}) $
is a group-groupoid.
\end{Th}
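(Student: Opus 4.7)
My plan is to unpack Definition 3.1: since hypothesis (i) already gives the group structures, everything reduces to verifying that the three pairs $(\omega,\omega_0)$, $(\nu,\nu_0)$ and $(\sigma,\sigma_0)$ are groupoid morphisms in the sense of Definition 2.4. Each check splits into a compatibility with source/target (condition (1)) and a compatibility with multiplication (condition (2)).

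For $(\omega,\omega_0)$ the source/target condition says $\alpha\circ\omega=\omega_0\circ(\alpha\times\alpha)$ and similarly for $\beta$; unwound at a pair $(x,y)$ this is just $\alpha(x\oplus y)=\alpha(x)\oplus\alpha(y)$, which is hypothesis (ii). The multiplicativity condition, evaluated at composable pairs $(x,y),(z,t)\in G_{(2)}$, becomes $(x\cdot y)\oplus(z\cdot t)=(x\oplus z)\cdot(y\oplus t)$, which is exactly hypothesis (iii). For $(\nu,\nu_0)$, condition (1) demands $\alpha(e)=e_0=\beta(e)$, which follows because $\alpha,\beta$ are group morphisms. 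Condition (2) asks that $e\cdot e=e$; here I first note $\beta(e)=e_0=\alpha(e)$, so $(e,e)\in G_{(2)}$, and then use (G2) of Definition 2.1 together with $\e(e_0)=e$ (since $\e$ is a group morphism by (ii)) to get $e\cdot e=\e(\alpha(e))\cdot e=e$.

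The main obstacle is $(\sigma,\sigma_0)$. Its source/target condition unwinds to $\alpha(\bar x)=\overline{\alpha(x)}$ and $\beta(\bar x)=\overline{\beta(x)}$, which follow from (ii) because any group morphism commutes with inverses. The non-trivial point is condition (2), i.e.\ $\overline{x\cdot y}=\bar x\cdot\bar y$ for every $(x,y)\in G_{(2)}$. I plan to prove this by showing $(x\cdot y)\oplus(\bar x\cdot\bar y)=e$. For $(x,y)\in G_{(2)}$ I have $\beta(x)=\alpha(y)$, and applying $\sigma_0$ gives $\beta(\bar x)=\alpha(\bar y)$ via the source/target identity already established, so $(\bar x,\bar y)\in G_{(2)}$ and the right-hand product is defined. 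The interchange law (3.1), applied to the pairs $(x,y)$ and $(\bar x,\bar y)$, yields
\[
(x\cdot y)\oplus(\bar x\cdot\bar y)=(x\oplus\bar x)\cdot(y\oplus\bar y)=e\cdot e=e,
\]
using $x\oplus\bar x=y\oplus\bar y=e$ and $e\cdot e=e$ from the previous paragraph. Hence $\bar x\cdot\bar y$ is the $\oplus$-inverse of $x\cdot y$, i.e.\ $\overline{x\cdot y}=\bar x\cdot\bar y$, as required.

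With all three pairs verified as groupoid morphisms and the group structures provided by (i), Definition 3.1 is satisfied, so $(G,\alpha,\beta,m,\e,i,\oplus,G_0)$ is a group-groupoid. The only subtle step is the identity $\overline{x\cdot y}=\bar x\cdot\bar y$; everything else is a direct translation of the hypotheses into the morphism conditions.
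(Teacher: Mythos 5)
Your proposal is correct and follows essentially the same route as the paper: verify that $(\omega,\omega_0)$, $(\nu,\nu_0)$ and $(\sigma,\sigma_0)$ are groupoid morphisms, with the only nontrivial step being $\overline{x\cdot y}=\bar{x}\cdot\bar{y}$, obtained exactly as in the paper from the interchange law applied to $(x,y)$ and $(\bar{x},\bar{y})$. The only (harmless) difference is that you verify just one of the two identities $(x\cdot y)\oplus(\bar{x}\cdot\bar{y})=e$, which indeed suffices in a group, whereas the paper checks both sides.
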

\begin{proof}
By hypothesis, the condition $(i)$ from Definition 3.1 is
verified. It remains to prove that the condition $(ii)$ holds.

$(a)~$ We prove that $(\omega,\omega_{0}):(G\times G, G_{0}\times
G_{0}) \to (G, G_{0})$ is a  morphism of groupoids. Since $\alpha
$ is a morphism of groups, it follows $\alpha(x\oplus y)=
\alpha(x)\oplus \alpha(y)$, for all $x,y\in G.$ Then $\alpha
(\omega(x,y))=\omega_{0}(\alpha(x),\alpha(y)),$ and it follows
$\alpha (\omega(x,y))=\omega_{0}((\alpha \times \alpha )(x,y));$
i.e., $\alpha \circ \omega =\omega_{0}\circ (\alpha \times
\alpha).$ Similarly, we prove that $ \beta \circ \omega
=\omega_{0}\circ (\beta \times \beta).$  Hence the condition $(i)$
from Definition 2.3 is satisfied.

We suppose that the interchange law $(3.1)$ holds. Then,
 for all $(x,y)$ and $(z,t)$ in $G_{(2)}$ we
 have $~(x\cdot y)\oplus (z\cdot t)= (x\oplus z)\cdot (y\oplus t).$
From the last equality it follows\\[0.1cm]
$m(x,y)\oplus m(z,t)=\omega(x,z)\cdot \omega(y,t)~~\Rightarrow ~~
\omega(m(x,y),m(z,t))= m(\omega(x,z),(\omega(y,t)).$

Then $\omega(m_{G\times G}((x,y),(z,t)))=
m(\omega(x,z),(\omega(y,t)),$ and the condition $(ii)$ from
Definition 2.3 holds. Hence, $ (\omega, \omega_{0})$ is a groupoid
morphism.

$(b)~$ We prove that $~(\nu, \nu_{0})~$ is a morphism of
  groupoids (here $\{\lambda\}$ is regarded as null groupoid with
  the structure functions
  $\alpha_{0}^{\prime}, \beta_{0}^{\prime}, \e_{0}^{\prime}, i_{0}^{\prime} $ and
  multiplication $m_{0}^{\prime} ).$ Since  $\alpha$ and $\e$ are group morphisms, we have
  $\alpha(e)=e_{0}$ and $\e(e_{0})=e$. From $\alpha(\nu(\lambda))=\alpha(e)=e_{0}$ and
$\nu_{0}(\lambda) = e_{0}$, it follows $\alpha \circ \nu =
\nu_{0}\circ Id.$ Similarly, we have $ \beta\circ \nu =
\nu_{0}\circ Id.$  Also, we have
 $~\nu (m_{0}^{\prime}(\lambda,
\lambda))= \nu (\lambda\cdot \lambda)= \nu (\lambda)=e $
and\\[0.1cm]
$m(\nu(\lambda), \nu(\lambda))=e\cdot e= \e(e_{0})\cdot e =
\e(\alpha (e))\cdot e = e.$ Then, $\nu (m_{0}^{\prime}(\lambda,
\lambda))=m(\nu(\lambda), \nu(\lambda)).$ Hence, the pair $(\nu,
\nu_{0})$ is a groupoid morphism.

 $(c)~$ We prove that $~(\sigma, \sigma_{0})~$ is a groupoid morphism.
 Applying $(3.6)$ we have $\alpha (
\sigma(x))=\alpha(\bar{x})=\overline{\alpha(x)} $  and
$\sigma_{0}(\alpha(x))= \overline{\alpha(x)}.$ Then $ \alpha\circ
\sigma = \sigma_{0}\circ \alpha.$ Similarly, we have $ \beta\circ
\sigma = \sigma_{0}\circ \beta.$ We shall prove that:\\[0.1cm]
 $(1)~~~\overline{x\cdot y} = \bar{x}\cdot
 \bar{y},~~(\forall)~(x,y)\in G_{(2)}.$

From $(x,y)\in G_{(2)}$ we have $\beta(x)=\alpha(y)$. Then
$\overline{\beta(x)}=\overline{\alpha(y)}$, and it follows
$\beta(\bar{x})=\alpha(\bar{x})$. Therefore $(\bar{x},\bar{y})\in
G_{(2)}$. Using now $(3.1)$ one obtains\\[0.1cm]
$(2)~~~(x\cdot y)\oplus (\bar{x}\cdot \bar{y})= (x \oplus
\bar{x})\cdot (y \oplus \bar{y})~~~\hbox{and}~~~ (\bar{x}\cdot
\bar{y})\oplus (x\cdot y) = (\bar{x} \oplus x)\cdot (\bar{y}
\oplus y).$

Using the relations  $a\oplus \bar{a}=\bar{a}\oplus a= e,~$ and $~e\cdot e=e, $ from $(2)$, we have\\[0.1cm]
$(3)~~~(x\cdot y)\oplus (\bar{x}\cdot \bar{y})= e
~~~\hbox{and}~~~(\bar{x}\cdot \bar{y})\oplus (x\cdot y)= e. $

From $(3)$ one obtains that the equality $(1)$ holds.

The relation $(1)$ is equivalently with\\[0.1cm]
$\sigma(x\cdot y)= \sigma(x)\cdot \sigma(y)~~\Leftrightarrow ~
\sigma(m_{G}(x,y)) = m_{G}(\sigma(x),\sigma(y)).$

Hence, $ (\sigma, \sigma_{0})$ is a groupoid morphism.
\end{proof}
According to Proposition  $3.1$ and Theorem $3.1$ we give a new
definition (Definition $3.2$) for the notion of group-groupoid
(this is equivalent with Definition $3.1$).
\begin{defn}
{\rm A {\it group-groupoid} is a groupoid $ (G,\alpha, \beta, m,
\e, i, G_{0})$ such that the following conditions are satisfied:

$(i)~~ (G, \oplus)$ and $ (G_{0}, \oplus )$ are groups;

$(ii)~~ \alpha, \beta: (G, \oplus) \to (G_{0}, \oplus),~ \e:
(G_{0}, \oplus) \to (G, \oplus)~$ and $~i: (G, \oplus) \to (G,
\oplus) $ are morphisms of groups;

$(iii)~ $ the interchange law $(3.1)$ between  the operations $m$
and  $\oplus$ holds.} \hfill$\Box$
\end{defn}

If in Definition 3.2, we consider $ G_{0}\subseteq G$ and $
\varepsilon: G_{0}\to G$ is the inclusion map, then $ (G, \alpha,
\beta, m, i, \oplus, G_{0})$ is a group-groupoid and we  say that
it is a {\it group$-G_{0}-$groupoid}.
\begin{Ex}
{\rm $(i)~$ Let $G_{0}$ be a group. Then $ G_{0}$ has a structure
of null groupoid over $G_{0}$ (see Example 2.1(i)). We have that
$G= G_{0}$ and $ \alpha, \beta,\varepsilon, i $ are morphisms of
groups. It is easy to prove that the interchange law $(3.1)$ is
verified. Then $G_{0}$ is a group-groupoid, called the {\it null
group-groupoid} associated to group $G_{0}$.

$(ii)~$ A commutative  group $(G, \oplus)$ having $\{e\}$ as unity
may be considered to be a $\{ e\}-$groupoid (see Example 2.1(ii)).
In this case, $ m =\oplus$. We have that $(G, \oplus)$ and
$G_{0}=\{e\}$ are groups. It is easy to see that $\alpha, \beta,
\e $ and $i$ are morphisms of groups. It remains to verify that $
(3.1)$ holds. Indeed, for $x,y,z,t \in G$ we have\\
 $(x\oplus y)\oplus (z\oplus t)= (x\oplus z )\oplus (y \oplus t),$ since the
operation $\oplus$ is associative and commutative. Hence $(G,
\alpha, \beta, m, \e, i, \oplus, \{e\})$ is a group-groupoid
called {\it group-groupoid with a single unit} associated to
commutative group $(G, \oplus)$.
 Therefore, {\it each commutative group $ G $ can be regarded as a commutative group
 $-\{e\}-$groupoid}.} \hfill$\b$
\end{Ex}
\begin{Ex}
 {\rm Let $(G, \oplus)$ be a group and  $( G\times G,
\overline{\alpha}, \overline{\beta},
\overline{m},\overline{\varepsilon}, \overline{i}, G )$ the pair
groupoid associated to $G$ (see Example 2.1(iii)). We have that $
G\times G$ is a group endowed with operation  $ (x_{1}, x_{2})
\oplus (y_{1}, y_{2}):=( x_{1}\oplus y_{1}, x_{2}\oplus y_{2}),~$
for all $ x_{1}, x_{2}, y_{1}, y_{2}\in G$. It is easy to check
that $ \overline{\alpha}, \overline{\beta}, \overline{\varepsilon}
 $ and $ \overline{i} $ are group morphisms. Therefore, the
conditions $(i)$ and $(ii)$ from Definition $3.2$ are satisfied.
 For to prove that the condition $(iii)$ is verified, we consider $
x=(x_{1}, x_{2}), y=(y_{1}, y_{2}), z=(z_{1}, z_{2}), t=(t_{1},
t_{2}) $ from $G\times G$ such that
$\overline{\beta}(x)=\overline{\alpha}(y) $ and
$\overline{\beta}(z)= \overline{\alpha}(t).$ Then $ x_{2}=y_{1} $
and $ z_{2}=t_{1}$. It follows $y=(x_{2}, y_{2}),~t=(z_{2},
t_{2}), x\cdot y = (x_{1}, y_{2}) $ and $  z\cdot t = (z_{1},
t_{2}). $ We have\\
$(x\cdot y)\oplus (z\cdot t) = (x_{1}, y_{2})\oplus (z_{1},
t_{2})= (x_{1}\oplus z_{1}, y_{2}\oplus t_{2})~$ and\\[0.1cm]
$ (x\oplus z)\cdot (y\oplus t)=  (x_{1}\oplus z_{1}, x_{2}\oplus
z_{2})\cdot (x_{2}\oplus z_{2}, y_{2}\oplus t_{2})=(x_{1}\oplus
z_{1}, y_{2}\oplus t_{2}).~$ Then, $ (x\cdot y)\oplus (z\cdot
t)=(x\oplus z)\cdot (y\oplus t)$ and so  $(3.1)$ holds. Hence
$G\times G$ is a group-groupoid called the  {\it group-pair
groupoid} associated to group $G$.}
 \hfill$\b$
\end{Ex}
\begin{Ex}
 {\rm Consider the groups  $({\bf R}^{2}, + )$ and $({\bf R}, +).$
For $(G, G_{0})$ where $G:={\bf R}^{2}$ and $G_{0}:={\bf R}$, we
define the structure functions $\alpha, \beta: {\bf R}^{2} \to
{\bf R},~\varepsilon :{\bf R}\to {\bf R}^{2}~$ and $~i:{\bf
R}^{2}\to {\bf R}^{2}$ as follows: $~ \alpha(x_{1},x_{2}):= x_{1}+
2 x_{2},~~\beta(x_{1},x_{2}):= x_{1}+ x_{2},$\\[0.1cm]
$\varepsilon(x_{1}):=(x_{1},0)~$ and $~i(x_{1},x_{2}):=(x_{1}+ 3
x_{2}, -x_{2}), $ for all $x_{1}, x_{2}\in {\bf R}.$

Let $ G_{(2)}:=\{((x_{1}, x_{2}),(y_{1}, y_{2} ))\in {\bf
R}^{2}\times {\bf R}^{2}~|~ x_{2}= - x_{1} + y_{1} + 2 y_{2}\}$ be
the set of composable pairs. The multiplication $ m:
G_{(2)} \to G$ is given by:\\[0.1cm]
$ (x_{1}, x_{2})\cdot (y_{1}, y_{2}): = (x_{1}- 2 y_{2}, x_{2} +
y_{2}),~~~~~\hbox{if}~~~x_{2}= - x_{1} + y_{1} + 2 y_{2}.$

It is easy to check that the above structure functions determine
on $G$ a structure of a groupoid over $G_{0}$. Also, the maps
$\alpha, \beta, \varepsilon$ and $i$ are group morphisms.
Therefore, the conditions $(i)$ and $(ii)$ from the Definition 3.2
hold.

We consider $x,y, z,t \in {\bf R}^{2}$ such that the products
$x\cdot y$ and $z\cdot t$ are defined. Then $ x=(x_{1}, x_{2}),
y=(y_{1}, y_{2}),  z=(z_{1}, z_{2}), t=(t_{1}, t_{2})$  such that
$ x_{2}= - x_{1} + y_{1} + 2 y_{2}$ and $z_{2}= - z_{1} + t_{1} +
2 t_{2}.$ We have $~ x\cdot y =(x_{1}- 2 y_{2}, x_{2} + y_{2}),~
z\cdot t =(z_{1}- 2 t_{2}, z_{2} + t_{2}).$ Then $~(x\cdot y) +
(z\cdot t) = (x_{1}- 2 y_{2}+ z_{1}- 2 t_{2},x_{2} +
y_{2}+z_{2} + t_{2})~$ and\\[0.1cm]
$(x + z)\cdot (y+t) = (x_{1}+ z_{1}-2(y_{2}+ t_{2}), x_{2}+ z_{2}+
y_{2}+ t_{2}).$

Hence, $(x\cdot y) + (z\cdot t) = (x + y)\cdot (z+t)$ and the
interchange law $(3.1)$ holds. Therefore, $({\bf R}^{2}, \alpha,
\beta, m, \varepsilon, i, {\bf R}) $ is a commutative group-
groupoid.

Let us we consider the Euclidean plane ${\bf R}^{2}$ with the
Cartesian coordinate system $ O x_{1} x_{2}.$ The $\alpha-$fibres
$\alpha^{-1}(u)$ for $u\in {\bf R}$ are represented by parallel
straight lines of equation $x_{1} + 2 x_{2} - u = 0.$ Also, the
$\beta-$fibres $\beta^{-1}(v)$ for $v\in {\bf R}$ are represented
 by parallel straight lines of equation $x_{1} + x_{2}
- v = 0.$

 Let be the points $A_{1}, A_{2}, A_{3}, A_{4}$ associated to
elements $\varepsilon(\beta(x)), x, x\cdot y, y\in G$, for
$\beta(x)=\alpha(y).$ Then $A_{1}(b+2c, 0), A_{2}(a, -a+b+2c),
A_{3}(a-2c, -a+b+3c) $ and $ A_{4}(b, c). $ We have that: {\it the
simple quadrilateral $A_{1}A_{2}A_{3}A_{4}$ is a parallelogram}.

Indeed, the slope of line through $A_{1}$ and $A_{4}$ is
$m_{A_{1}A_{4}}=-\frac{1}{2}$ and the distance  from $A_{1}$ and
$A_{4}$ is $ d(A_{1}, A_{4})=|c|\sqrt{5}.$ Also,
$m_{A_{2}A_{3}}=-\frac{1}{2}$ and $ d(A_{2}, A_{3})=|c|\sqrt{5}.$

Let be the points $B_{1}, B_{2}, B_{3}, B_{4}$ associated to
$\varepsilon(\alpha(x)), x, \varepsilon(\beta(x)), x^{-1}\in G.$
We have $ B_{1}(x_{1}+2x_{2}, 0), B_{2}(x_{1}, x_{2}),
B_{3}(x_{1}+x_{2}, 0) $ and $ B_{4}(x_{1}+3x_{2}, -x_{2}). $ Then:
 {\it the simple quadrilateral $B_{1}B_{2}B_{3}B_{4}$ is a
parallelogram}.

Indeed, we have $m_{B_{1}B_{2}}=m_{B_{3}B_{4}}=-\frac{1}{2}$ and $
d(B_{1}, B_{2})= d(B_{3}, B_{4})= |x_{2}|\sqrt{5}.$}\hfill$\Box$
\end{Ex}
\begin{defn}
{\rm Let $(G ,\alpha, \beta, m, \e, i, \oplus, G_0)$  be a
group-groupoid. A subgroupoid $(H,H_0)$ of the groupoid $(G,
G_{0}) $ is called a \textit{group-subgroupoid} of $(G, G_{0}) $,
if $ H $  and $ H_{0} $ are subgroups in $G$ and $G_{0}$,
respectively.

If $H_{0}=G_{0}$ we say that $H$ is a \textit{group
$-G_{0}-$subgroupoid} of $(G, G_{0})$}.\hfill$\Box$
\end{defn}
 According to Definition 3.3, if $(H,H_0)$ is a group-subgroupoid of
 $(G, G_0)$, then the pair $(H,H_0)$ endowed with the restrictions of
functions $\alpha, \beta , i ~\hbox{and}~ \oplus $ to $H$,
$\varepsilon $ to $H_{0}$ and  $ m $ to $ H_{(2)}$, is a
group-groupoid, denoted by $(H,H_0).$ We denote by the same
letters the structure functions of $H$ as well as those of $G$.
\begin{Prop}
Let $(G ,\alpha, \beta, m, \e, i, \oplus, G_0)$ be a
group-groupoid. Then:

$(i)~$ The  fibres $\alpha^{-1}(e_{0})$ and $\beta^{-1}(e_{0})$
are subgroups in $G$.

$(ii)~$ The isotropy group $G(e_{0})$ is a
group$-\{e_{0}\}-$subgroupoid of $G$.

$(iii)~~\varepsilon(G_{0})$  and $ G$ are group
$-G_{0}-$subgroupoids of $G$.

$(iv)~~Is(G):=\{ x\in G~|~\alpha (x)=\beta (x)\}~$ is a group
$-G_{0}-$subgroupoid of $G$.
\end{Prop}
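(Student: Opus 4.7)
The plan is to dispatch the four items in order, leveraging throughout that (by Definition 3.2, or equivalently Corollary 3.1) $\alpha,\beta:(G,\oplus)\to(G_0,\oplus)$ are group epimorphisms, $\varepsilon:(G_0,\oplus)\to(G,\oplus)$ is a group monomorphism, and $i$ is a group automorphism.

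For $(i)$, I would simply observe that $\alpha^{-1}(e_0)=\ker\alpha$ and $\beta^{-1}(e_0)=\ker\beta$ are subgroups of $(G,\oplus)$ as kernels of group morphisms. For $(ii)$, since $G(e_0)=\alpha^{-1}(e_0)\cap\beta^{-1}(e_0)$, part $(i)$ immediately exhibits it as a subgroup of $(G,\oplus)$; combined with the fact that every isotropy group is automatically a subgroupoid (recalled in Section 2) and that $\{e_0\}$ is trivially a subgroup of $G_0$, this yields the group$-\{e_0\}$-subgroupoid structure. Corollary 3.3 incidentally shows that $m$ and $\oplus$ coincide on $G(e_0)$, which is reassuring but not needed.

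For $(iii)$, $\varepsilon(G_0)$ is a subgroup of $(G,\oplus)$ because $\varepsilon$ is an injective group morphism. Closure under the partial multiplication $m$ and the inversion $i$ follows from Proposition 2.1(iii): two elements $\varepsilon(u),\varepsilon(v)$ are composable only when $u=v$, in which case $\varepsilon(u)\cdot\varepsilon(u)=\varepsilon(u)$ and $\varepsilon(u)^{-1}=\varepsilon(u)$. Since $\alpha\circ\varepsilon=\beta\circ\varepsilon=Id_{G_0}$, the anchor hits all of $G_0$, so $H_0=G_0$. The case $H=G$ is trivial.

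For $(iv)$, I would verify closure under both operations separately. For $x,y\in Is(G)$, the fact that $\alpha,\beta$ are group morphisms yields $\alpha(x\oplus y)=\alpha(x)\oplus\alpha(y)=\beta(x)\oplus\beta(y)=\beta(x\oplus y)$; together with $\alpha(\bar x)=\overline{\alpha(x)}=\overline{\beta(x)}=\beta(\bar x)$ from $(3.5)$, this shows $Is(G)$ is a subgroup of $(G,\oplus)$. For composable $x,y\in Is(G)$, Proposition 2.1(i) gives $\alpha(x\cdot y)=\alpha(x)=\beta(y)=\beta(x\cdot y)$, and Proposition 2.1(ii) gives $\alpha(x^{-1})=\beta(x)=\alpha(x)=\beta(x^{-1})$, so $Is(G)$ is also a subgroupoid of $(G,G_0)$. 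Finally $\varepsilon(G_0)\subseteq Is(G)$ forces the base to be all of $G_0$. No step presents a genuine obstacle; the only item requiring real care is $(iv)$, where compatibility with both $m$ and $\oplus$ has to be checked, and each check reduces to a one-line application of the facts already recorded in Propositions 2.1 and 3.1 and Corollary 3.1.
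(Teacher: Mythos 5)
Your proposal is correct and follows essentially the same route as the paper: part $(i)$ via closure of $\alpha^{-1}(e_0)$, $\beta^{-1}(e_0)$ under $\oplus$ and $\sigma$ (the paper verifies this directly using $(3.2)$ and $(3.5)$ where you invoke the kernel of a group morphism, which is the same computation), part $(ii)$ as the intersection of the two subgroups from $(i)$, and part $(iv)$ by exactly the same checks of closure under $m$, $i$, $\oplus$ and $\sigma$ using Proposition 2.1 and the fact that $\alpha,\beta$ are group morphisms. Your write-up actually supplies the details for $(iii)$ that the paper dismisses as easy, and all of them are sound.
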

{\it Proof.} $(i)~$ For all $x,y\in \alpha^{-1}(e_{0})$ we have
$x\oplus y\in \alpha^{-1}(e_{0})$ and $\bar{x}\in
\alpha^{-1}(e_{0})$. Indeed, applying $(3.2)$ we have
$\alpha(x\oplus y)=\alpha(x)\oplus \alpha(y)= e_{0}\oplus
e_{0}=e_{0}$. Also, using $(3.5)$ it follows
$\alpha(\bar{x})=\overline{\alpha(x)}=\overline{e_{0}}=e_{0}$.
Then $\alpha^{-1}(e_{0})$ is a subgroup in $(G,\oplus)$.
Similarly, we prove that $\beta^{-1}(e_{0})$ is a subgroup in
$(G,\oplus)$.

$(ii)~$ It is easy to verify that $G(e_{0})$ is a
$\{e_{0}\}-$subgroupoid. Also, according to $(i)$ we have that
$G(e_{0})$ is a subgroup of $(G,\oplus)$, since $G(e_{0})=
\alpha^{-1}(e_{0})\cap \beta^{-1}(e_{0})$. Hence, the conditions
from Definition $3.3$ are satisfied and $G(e_{0})$ is a
group$-\{e_{0}\}-$subgroupoid.

$(iii)~$ It is easy to verify that $ \varepsilon(G_{0})$ and $G$
are group $-G_{0}-$subgroupoids.

$(iv)~$  Clearly, $\alpha(Is(G))=\beta(Is(G))=G_{0}.$ Let $x,y\in
Is(G)$ with $ (x,y)\in G_{(2)}$. Then
$\alpha(x)=\beta(x)=\alpha(y)=\beta(y)$. We have
$\alpha(xy)=\alpha(x)=\beta(y)=\beta(xy)$ and
$\alpha(x^{-1})=\beta(x)=\alpha(x)=\beta(x^{-1})$. Hence,  $ xy,
x^{-1}\in Is(G)$ and $Is(G)$ is a sugroupoid. Since $\alpha$ and
$\beta$ are group morphisms, implies that $~\alpha( x \oplus y
)=\beta (x \oplus  y)~$ and $~\alpha(\bar{x})=\beta(\bar{x}). $
Then, for all $x,y \in Is(G)$ we have  $ x \oplus y, \bar{x} \in
Is(G).$  Therefore, $ Is(G)$ is a subgroup in $(G,\oplus)$. Hence,
$Is(G)$ is a group $-G_{0}-$subgroupoid. \hfill$\Box$

 The group-subgroupoid $ Is(G)$ is the
union of all isotropy groups of $G$ and it is called the {\it
isotropy bundle} of the group-groupoid $(G,G_{0})$.
\begin{defn}
{\rm Let $ ( G_{j}, \alpha_{j}, \beta_{j}, m_{j}, \varepsilon_{j},
i_{j}, \oplus_{j}, G_{j,0} ),~ j=1,2$ be two group-groupoids. A
groupoid morphism  $ (f, f_{0}): (G_{1}, G_{1,0})\to (G_{2},
G_{2,0}) $ such  that $ f $ and $ f_{0}$ are group morphisms, is
called  {\it group-groupoid morphism} or {\it morphism of
group-groupoids}.

A group-groupoid morphism of the form $ (f, Id_{G_{1,0}}): (G_{1},
G_{1,0})\to (G_{2}, G_{1,0}) $ is called  {\it $ G_{1,0}-$morphism
of group-groupoids}.  It is denoted by $ f: G_{1} \to
G_{2}$.}\hfill$\Box$
\end{defn}

The category of group-groupoids, denoted by ${\mathcal G}Gpd$, has
its objects all group-groupoids $(G, G_{0})$ and as morphisms from
$(G, G_{0})$ to $(G^{\prime}, G_{0}^{\prime})$  the set of all
morphisms of group-groupoids.
\begin{Ex}
{\rm {\bf Direct product of two group-groupoids}. Let given the
group-groupoids $ (G, G_{0}) $ and $ (K, K_{0}) $. We consider the
direct product  $ (G\times K, G_{0}\times K_{0})~$ of the
groupoids $(G, G_{0}) $ and $ (K, K_{0})$ (see Example 2.1 (iv)).
On $ G\times K$ and $ G_{0}\times K_{0}$ we
introduce the usual group operations. These operations are defined by\\
$(g_{1}, k_{1})\oplus_{G\times K} (g_{2}, k_{2}):= ( g_{1}
\oplus_{G} g_{2}, k_{1} \oplus_{K} k_{2} ), ~ (\forall) g_{1}, g_{2}\in G, k_{1}, k_{2}\in K$  ~~\hbox{and}\\
$(u_{1}, v_{1})\oplus_{G_{0}\times K_{0}} (u_{2}, v_{2}):= ( u_{1}
\oplus_{G_{0}} u_{2}, v_{1} \oplus_{K_{0}} v_{2} ),~ (\forall)
u_{1}, u_{2}\in G_{0}, v_{1}, v_{2}\in K_{0}$.

By a direct computation we prove that the conditions from
Definition 3.2 are satisfied. Then  $ (G\times K, G_{0}\times
K_{0})$ is a group-groupoid, called the {\it direct product of
group- groupoids} $(G,G_{0})$ and $(K,K_{0})$. The canonical
projections $ pr_{G} : G\times K \to G$ and $ pr_{K} : G\times K
\to K$ are morphisms of group-groupoids.}\hfill$\Box$
\end{Ex}
\begin{Prop}
Let $(G,\alpha, \beta, m, \varepsilon, i , \oplus, G_{0})$ be a
group-groupoid. The anchor map $(\alpha, \beta): G \to G_{0}\times
G_{0} $ is a $ G_{0}-$ morphism of group-groupoids from the
group-groupoid $(G, G_{0})$  into the group-pair groupoid $ (
G_{0}\times G_{0}, \overline{\alpha}, \overline{\beta},
\overline{m}, \overline{\varepsilon}, \overline{i}, \oplus, G_{0})
$.
\end{Prop}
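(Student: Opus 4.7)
The plan is to unpack the claim into its two halves: verify that $(\alpha,\beta)$ is a groupoid morphism over $\mathrm{Id}_{G_{0}}$, and then verify that both $(\alpha,\beta):G\to G_{0}\times G_{0}$ and $\mathrm{Id}_{G_{0}}:G_{0}\to G_{0}$ are group homomorphisms. All the technical work has already been done in Proposition 3.1 and Example 3.2, so the proof is essentially a bookkeeping exercise; no step is a genuine obstacle.

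First I would verify the groupoid-morphism axioms of Definition 2.3 for the pair $((\alpha,\beta),\mathrm{Id}_{G_{0}})$. Using the structure functions $\overline{\alpha},\overline{\beta}$ of the pair groupoid $G_{0}\times G_{0}$ (which project onto the first and second factors), the compatibility with source/target reduces to $\overline{\alpha}\circ(\alpha,\beta)=\alpha=\mathrm{Id}_{G_{0}}\circ\alpha$ and $\overline{\beta}\circ(\alpha,\beta)=\beta=\mathrm{Id}_{G_{0}}\circ\beta$, both immediate. For compatibility with multiplication, pick $(x,y)\in G_{(2)}$, so $\beta(x)=\alpha(y)$; then Proposition 2.1(i) gives
\[
(\alpha,\beta)(x\cdot y)=(\alpha(x),\beta(y))=(\alpha(x),\beta(x))\cdot(\alpha(y),\beta(y)),
\]
where the last equality is just the definition of multiplication in the pair groupoid, since $\beta(x)=\alpha(y)$ makes the right-hand pair composable.

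Next I would check that both components are group morphisms, so that the pair qualifies as a morphism of group-groupoids in the sense of Definition 3.4. The map $\mathrm{Id}_{G_{0}}$ is trivially a group morphism. For $(\alpha,\beta)$, take $x,y\in G$; applying the first two relations of $(3.2)$ from Proposition 3.1(ii) yields
\[
(\alpha,\beta)(x\oplus y)=(\alpha(x)\oplus\alpha(y),\beta(x)\oplus\beta(y))=(\alpha(x),\beta(x))\oplus_{G_{0}\times G_{0}}(\alpha(y),\beta(y)),
\]
using the componentwise group operation on $G_{0}\times G_{0}$ recalled in Example 3.2. This finishes the verification and shows $(\alpha,\beta)$ is a $G_{0}$-morphism of group-groupoids. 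The only place one has to be careful is to cite Proposition 3.1(ii) at the right moment, since without the fact that $\alpha,\beta$ respect $\oplus$ the group-morphism part would fail; but this has already been established, so there is no real obstacle.
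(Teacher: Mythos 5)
Your proposal is correct and follows essentially the same route as the paper's proof: verify the source/target compatibility $\overline{\alpha}\circ(\alpha,\beta)=\alpha$, $\overline{\beta}\circ(\alpha,\beta)=\beta$, use Proposition 2.1(i) together with the pair-groupoid multiplication to check compatibility with $m$, and then invoke the relations $(3.2)$ to see that $(\alpha,\beta)$ respects $\oplus$ componentwise. There is no gap and no substantive difference from the paper's argument.
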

\begin{proof}
 We denote $ (\alpha, \beta):=f $. Then $ f(x)=(\alpha(x),\beta(x)),$ for all $x\in G.$ We prove that
 $ \overline{\alpha}\circ f = \alpha $. Indeed, for all $x\in G$ we
have $ (\overline{\alpha}\circ f)(x)=\overline{\alpha}(\alpha(x),
\beta(x)) =\alpha(x).$
 Therefore, $\overline{\alpha}\circ f = \alpha $. Similarly, we
verify that $ \overline{\beta}\circ f = \beta $.

 For $(x,y)\in G_{(2)}$ we have
 $ f(x\cdot y )= (\alpha(x\cdot y),
\beta(x\cdot y))= (\alpha(x), \beta(y))~$ and\\[0.1cm]
 $~\overline{m}(f(x),f(y))=
\overline{m}((\alpha(x),\beta(x)), (\alpha(y),\beta(y))) =
(\alpha(x),\beta(y)),$ since $\beta(x)=\alpha(y).$ Therefore, $ f(
x\cdot y )=\overline{m}(f(x),f(y))$. Hence, $ f $ is a  $G_{0}-$
morphism of groupoids.

Let $ x, y \in G .$ Since $ \alpha, \beta $ are group morphisms,
we have\\[0.1cm]
$f(x\oplus y)=(\alpha(x) \oplus \alpha(y), \beta(x) \oplus
\beta(y))= f(x)\oplus f(y) $, i.e. $f$ is a morphism of groups.
Hence $ f $ is a $ G_{0}-$ morphism of group-groupoids.
\end{proof}

\vspace*{0.2cm}

\hspace*{0.7cm}West University of Timi\c soara\\
\hspace*{0.7cm} Department for Training of Teachers (D.P.P.D.)\\
\hspace*{0.7cm} Bd. V. P{\^a}rvan,no.4, 300223, Timi\c soara, Romania\\
\hspace*{0.7cm}E-mail: ivan@math.uvt.ro

\end{document}